\newcommand{\sysn}{\left\{\begin{array}{rcl}}
\newcommand{\sysk}{\end{array}\right.}
\newtheorem{theorem}{Theorem}[section]
\newtheorem{proposition}[theorem]{Proposition}
\theoremstyle{definition}
\newtheorem{corollary}[theorem]{Corollary}
\journal{...}
\begin{document}

\begin{frontmatter}




\title{On the first Banach problem, concerning condensations of absolute $\kappa$-Borel sets onto
compacta}

\author{Alexander V. Osipov}

\ead{OAB@list.ru}


\address{Krasovskii Institute of Mathematics and Mechanics, Ural Federal
 University, Yekaterinburg, Russia}

\begin{abstract}
 It is consistent that the continuum be arbitrary large and no absolute $\kappa$-Borel set $X$ of
density $\kappa$, $\aleph_1<\kappa<\mathfrak{c}$, condenses onto a compactum.

It is consistent that the continuum be arbitrary large and any absolute $\kappa$-Borel set $X$ of
density $\kappa$, $\kappa\leq\mathfrak{c}$, containing a
closed subspace of the Baire space of weight $\kappa$, condenses
onto a compactum.

In particular, applying Brian's results in model theory, we get the following unexpected result.
Given any $A\subseteq \mathbb{N}$ with $1\in A$, there is a forcing extension in which every  absolute $\aleph_n$-Borel set, containing a closed subspace of the Baire space
of weight $\aleph_n$, condenses onto a compactum if, and only if, $n\in A$.

\end{abstract}

\begin{keyword} condensation \sep compact metric space \sep Banach
Problem

\MSC[2020] 57N17 \sep 57N20 \sep 54C10 \sep 54E99

\end{keyword}

\end{frontmatter}

\section{Introduction}

The following well-known problem of Stefan Banach from the Scottish Book:

{\bf Banach Problem.} When does a metric (possibly Banach) space
$X$ admit a condensation (i.e. a bijective continuous mapping)
onto a compactum (= compact metric space)?

\medskip
The complete answer to the Banach Problem  for Banach spaces can
be found in \cite{os}.

\medskip

In \cite{os1}, it is proved that any metric
space of weight $\mathfrak{c}$ admits a condensation onto the
Hilbert cube.

\medskip

In 1976, E.G. Pytkeev \cite{pyt} proved the following remarkable theorem for separable absolute $\aleph_0$-Borel sets.

\begin{theorem}(Pytkeev)
Every separable absolute Borel space $X$ condenses onto the Hilbert cube, whenever $X$ is not $\sigma$-compact.
\end{theorem}

 Hurewicz proved that a Polish space fails to be $\sigma$-compact if and only if it contains a closed set homeomorphic to $\omega^{\omega}$. Note that if a space $X$  contains a closed set homeomorphic to $\omega^{\omega}$ then $X$ is not $\sigma$-compact. Thus, by Pytkeev's Theorem, every separable absolute Borel space $X$ condenses (i.e. bijectively continuously mapped) onto the Hilbert cube, whenever $X$ contains a closed set homeomorphic to $\omega^{\omega}$.  This fact motivates us to consider the Banach Problem for absolute $\kappa$-Borel sets containing a closed set homeomorphic to $\kappa^{\omega}$.

\medskip

Note that any absolute $\kappa$-Borel set $X$ of density $\kappa$,
containing a closed subspace of the Baire space $\kappa^{\omega}$
of weight $\kappa$, condenses onto a Banach space of weight
$\kappa$ (Theorem 3 in \cite{Med}). This entails the following result.

\medskip

{\it Any absolute $\aleph_1$-Borel sets, containing a closed set homeomorphic to $\omega_1^{\omega}$,  condenses
onto a compactum}.

\medskip

It is enough to note that, by the Banakh-Plichko Theorem \cite{BaPl}, any Banach space of density $\aleph_1$ condenses onto the Hilbert cube.

\medskip

By Theorem 2.3 in \cite{os}, {\it there is a forcing
extension in which any Banach space of weight $\kappa$ ($\aleph_1<\kappa< \mathfrak{c}$) condenses onto a compactum}.

\medskip

Thus, combining the previous facts, we get the following result.

\begin{theorem}
It is consistent that the continuum be arbitrary large and any absolute $\kappa$-Borel set $X$ of
density $\kappa$, containing a closed subspace of the Baire space
of weight $\kappa$ ($\kappa\leq\mathfrak{c}$), condenses
onto a compactum.
\end{theorem}

\medskip

In this paper, we continue to study of the Banach Problem in the
class of absolute $\kappa$-Borel sets. The following main result is proved.

\medskip
\begin{theorem}\label{th3}\label{th3} It is consistent that the continuum be arbitrary
large and no absolute
$\kappa$-Borel set $X$ of density $\kappa$,
$\aleph_1<\kappa<\mathfrak{c}$, condenses onto a compactum.
\end{theorem}

The proof of the main theorem can be summarized as follows. If there is an absolute
$\kappa$-Borel set of density $\kappa$ that condenses onto a compactum $K$, then
there is a covering $\mathcal{C}$ of $K$ with $\leq cf[\kappa]^{\omega}$
Borel sets for which no $<\kappa$-sized
subset of $\mathcal{C}$ covers $K$. But there is a model (essentially the Cohen model)
in which $cf[\kappa]^{\omega}<\mathfrak{c}$ for every $\kappa<\mathfrak{c}$, and if there is a covering of $K$ with $<\mathfrak{c}$
Borel sets then some $\aleph_1$-many of those Borel sets already cover $K$. This is
a contradiction if $\kappa\in (\aleph_1, \mathfrak{c})$, i.e. $\aleph_1<\kappa<\mathfrak{c}$.

\section{Preliminaries}

The {\it density} $d(X)$ of a topological space $X$ is the
smallest cardinality of a dense subset of $X$. The cardinal
function $w(X)$ is the weight of $X$, which is defined by
$w(X)=min\{|\mathcal{B}| : \mathcal{B}$ is a base for $X\} +
\omega$. For a metrizable space $X$, we have $d(X)=w(X)$.

 Since metrizable
compact spaces have cardinality at most continuum, every metric
space admitting a condensation onto a compactum has density at
most continuum.

\medskip

Recall that the family of {\it hyper-Borel sets} of $X$, denoted
$HB(X)$, to be the smallest family of subsets of $X$ which
contains (i) the open sets of $X$, (ii) $X\setminus B$ whenever
$B\in HB(X)$, and (iii) $\bigcup_t B_t$ whenever each $B_t\in
HB(X)$ and $\{B_t\}$ is a $\sigma$-discrete family of subsets of
$X$. A set $B\subset X$ is called {\it $\kappa$-Borel} if $B$ is
hyper-Borel in $X$ of class $\alpha$ for some ordinal $\alpha$ of
cardinal $\leq \kappa$. The $\aleph_0$-Borel sets are the ordinary
Borel sets \cite{Han}.

A space $X$ is called {\it absolute $\kappa$-Borel}, if $X$ is
homeomorphic to a $\kappa$-Borel subset of some complete
metrizable space. Thus, absolute $\aleph_0$-Borel sets are the
ordinary absolute Borel sets.

\medskip
If $\kappa$ is an infinite cardinal, then $[\kappa]^{\mu}$ denotes the set of all subsets of $\kappa$ with cardinality~$\mu$. A family $\mathcal{F}\subseteq [\kappa]^{\omega}$ is {\it cofinal} if for every $A\in [\kappa]^{\omega}$ there is some $B\in \mathcal{F}$ such that $B\supseteq A$. Let $cf[\kappa]^{\omega}=\min\{|\mathcal{F}|: \mathcal{F}$ is a cofinal subset of $[\kappa]^{\omega}\}$.

\medskip

Let $FIN(\kappa,2)$ be the partial order of finite partial
functions from $\kappa$ to $2$, i.e., Cohen forcing.

\begin{proposition}(Corollary 3.13 in \cite{BrMi})\label{prop} Suppose $M$ is
a countable transitive model of $ZFC+GCH$. Let $\kappa$ be any
cardinal of $M$ of uncountable cofinality which is not the
successor of a cardinal of countable cofinality.  Suppose that $G$
is $FIN(\kappa,2)$-generic over $M$, then in $M[G]$ the continuum
is $\kappa$ and for every uncountable $\gamma<\kappa$ if $F:
\gamma^{\omega}\rightarrow \omega^{\omega}$ is continuous and
onto, then there exists a $Q\in [\gamma]^{\omega_1}$ such that
$F(Q^{\omega})=\omega^{\omega}$.
\end{proposition}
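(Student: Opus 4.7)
\emph{Continuum in $M[G]$.} The poset $FIN(\kappa,2)$ is ccc, hence preserves cardinals and cofinalities, and it adds $\kappa$ distinct generic reals, giving $2^{\aleph_{0}}\ge\kappa$ in $M[G]$. For the reverse inequality, I would count nice names: every real in $M[G]$ is represented by a sequence of $\omega$-many maximal antichains in $FIN(\kappa,2)$, each countable by ccc, so the total number of names is at most $\kappa^{\aleph_{0}}$. The hypotheses $\mathrm{cf}(\kappa)>\omega$ together with ``$\kappa$ is not the successor of a cardinal of countable cofinality'' guarantee $\kappa^{\aleph_{0}}=\kappa$ in $M$ under $GCH$, hence $2^{\aleph_{0}}\le\kappa$ in $M[G]$.

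\emph{Covering property.} This is the substantive half. Given a continuous surjection $F:\gamma^{\omega}\to\omega^{\omega}$ with $\aleph_{1}\le\gamma<\kappa$, the first observation is that $F$ is determined by a monotone map $h:\gamma^{<\omega}\to\omega^{<\omega}$ via $F(x)=\bigcup_{n}h(x\restriction n)$; since $|\gamma^{<\omega}|=\gamma$, the code $h$ lies in $M[G\restriction B]$ for some $B\subset\kappa$ with $|B|=\gamma<\kappa$. My initial attempt would be a L\"owenheim--Skolem argument: choose an elementary $N\prec H(\theta)^{M[G]}$ of size $\aleph_{1}$ with $\omega_{1}\cup\{F,\gamma,h\}\subset N$ and set $Q:=N\cap\gamma$, so $|Q|=\aleph_{1}$. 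For every $y\in N\cap\omega^{\omega}$, elementarity yields $x\in N\cap\gamma^{\omega}$ with $F(x)=y$, and since $\omega\subset N$ we have $\mathrm{range}(x)\subset Q$, so $y\in F(Q^{\omega})$.

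\emph{The main obstacle.} This L\"owenheim--Skolem step covers only $N\cap\omega^{\omega}$, of size $\aleph_{1}$, whereas $|\omega^{\omega}|=\kappa>\aleph_{1}$ in $M[G]$. Closing this gap is the crux. The natural strategy is to combine the smallness of $F$ (coded inside a part of $G$ of size $\gamma<\kappa$) with the homogeneity of $FIN(\kappa,2)$: given an arbitrary $y\in\omega^{\omega}$ whose name is supported on a countable $A\subset\kappa$, one seeks an automorphism of the forcing moving the relevant part of $A$ into $N\cap\kappa$ while keeping the code of $F$ essentially fixed, thereby producing an $F$-preimage of $y$ with range in $Q$. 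Making this uniform in $y$ -- so that a single $\aleph_{1}$-sized $Q$ serves every $y\in\omega^{\omega}$ simultaneously rather than a separate $Q$ per $y$ -- is the delicate heart of the matter, and is precisely what Corollary~3.13 of \cite{BrMi} accomplishes.
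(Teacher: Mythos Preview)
The paper does not supply a proof of this proposition; it is quoted as Corollary~3.13 of Brian--Miller \cite{BrMi} and used as a black box in the proof of Theorem~\ref{th3}. There is thus no in-paper argument to compare your proposal against.

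Assessing your sketch on its own terms: the continuum computation via ccc, nice names, and $\kappa^{\aleph_0}=\kappa$ under the stated hypotheses is correct and standard. For the covering property, your L\"owenheim--Skolem attempt is a natural first move and you correctly diagnose why it fails --- the elementary submodel $N$ of size $\aleph_1$ contains only $\aleph_1$ reals, whereas $|\omega^{\omega}|=\kappa>\aleph_1$ in $M[G]$. Your suggested repair via homogeneity and automorphisms of Cohen forcing is the right flavor of idea for this kind of result.

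However, what you have written is not a proof. The final paragraph explicitly hands off the crux --- achieving uniformity in $y$ so that a single $Q$ of size $\aleph_1$ works for all reals simultaneously --- back to Corollary~3.13 of \cite{BrMi}, which is precisely the statement you were asked to establish. So the proposal is an informed outline that correctly locates the difficulty but ends in a circular citation rather than an independent argument. Since the paper itself also simply cites \cite{BrMi}, your write-up is in that narrow sense no less complete than the paper's treatment; but as a standalone proof it has a genuine gap at exactly the point you flag.
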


\section{Main result}

Let us prove the main Theorem \ref{th3}.

\begin{proof} Suppose $M$ is a countable transitive model of $ZFC+GCH$. Suppose that $G$
is $FIN(\mathfrak{c},2)$-generic over $M$ (Proposition
\ref{prop}).

Note that the following property holds:

$(\star)$ for every family $\mathcal{F}$ of Borel subsets of
$\omega^{\omega}$ with size $\aleph_1<|\mathcal{F}|<\mathfrak{c}$,
if $\bigcup \mathcal{F}=\omega^{\omega}$ then there exists
$\mathcal{F}_0\in [\mathcal{F}]^{\omega_1}$ with $\bigcup
\mathcal{F}_0=\omega^{\omega}$ (see the proof of Corollary 3.13 in
\cite{BrMi}).

Fix $\aleph_1<\kappa<\mathfrak{c}$. Let $X$ be an absolute
$\kappa$-Borel set of density $\kappa$.

Assume that there is a condensation $g$ of $X$ onto a compactum $K$. Since $X$ is an absolute $\kappa$-Borel set of
density $\kappa$ there is  a continuous bijection $f: A\rightarrow
X$ where $A$ is a closed subset of $\kappa^{\omega}$ (Theorem 5 in
\cite{Han}) and a continuous surjection $q:
\kappa^{\omega}\rightarrow A$ (Theorem 4 in \cite{st1}). Since $f$
is a continuous bijection  and $d(X)=\kappa$, $d(A)=\kappa$.

 Then we have the continuous bijection $h=g\circ
f: A\rightarrow K$ from $A$ onto $K$.

Let $\sum=[\kappa]^{\omega}\cap M$. Note that
$|\sum|<\mathfrak{c}$ since in $M$ $|\kappa^{\omega}|>\kappa$ if
and only if $\kappa$ has cofinality $\omega$, but in that case
$|\kappa^{\omega}|=|\kappa^+|<\mathfrak{c}$. Since the forcing is
c.c.c.

$M[G]\models \kappa^{\omega}=\bigcup \{Y^{\omega}: Y\in \sum \}$.

Let $\sum':=\{Y\in \sum: Y^{\omega}\cap A\neq \emptyset\}$. For
any $Y\in \sum'$ the continuous image $h(Y^{\omega}\cap A)$ (note
that $Y^{\omega}\cap A$ is Polish because $A$ is closed) is an
analytic set (a $\Sigma^1_1$ set) and, hence the union of
$\omega_1$ Borel sets in $K$ (see Ch.3, $\S$ 39, Corollary 3 in
\cite{Kur1}).

Thus $h(Y^{\omega}\cap A)=\bigcup \{B(Y,\beta): \beta<\omega_1\}$
where $B(Y,\beta)$ is a Borel subset in $K$ for each $Y\in \sum'$
 and $\beta<\omega_1$.

Let $\theta=\min\{|S|: S\subseteq\{B(Y,\beta): Y\in \sum',
\beta<\omega_1\}$ and $\bigcup S=K\}$. Note that $\theta\leq
|\sum'|\leq |\sum|<\mathfrak{c}$.

{\it Claim 1.}  $\theta\geq\kappa$.

Assume that $\theta<\kappa$. Let $S=\{B(Y_\zeta,\beta_\zeta):
\zeta\in \theta\}$. Consider a function $\phi:
\{B(Y_\zeta,\beta_\zeta): \zeta\in \theta \}\rightarrow \sum'$
such that $\phi(B(Y_\zeta,\beta_\zeta))=Y_{\xi}\in \sum'$ where
$h(Y_\xi^{\omega}\cap A)$ contains in decomposition the set
$B(Y_\zeta,\beta_\zeta)$ ($Y_{\xi}$ may be the same for different
$B(Y_1,\beta_1)$ and $B(Y_2,\beta_2)$). Since $h$ is a
condensation and $K=\bigcup\{h(Y_\xi^{\omega}\cap A):
\xi\in\theta\}$, $X=\bigcup \{f(Y^{\omega}_{\xi}):\xi\in\theta\}$
and $A\subseteq \bigcup\{Y^{\omega}_{\xi}:\xi\in\theta\}$. Let
$Q=\bigcup\{Y_{\xi}:\xi\in\theta\}$ then $Q\in [\kappa]^{\leq
\theta}$. Note that $A\subseteq
\bigcup\{Y^{\omega}_{\xi}:\xi\in\theta\}\subseteq
Q^{\omega}\subset \kappa^{\omega}$ and $w(Q^{\omega})\leq \theta$.
Since $f$ is continuous, $w(f(Q^{\omega}))\leq w(Q^{\omega})$. But
$\kappa=w(X)=w(f(Q^{\omega}))\leq w(Q^{\omega})\leq\theta$ is a
contradiction.

 Thus, $\kappa\leq \theta\leq
|\sum|<\mathfrak{c}$.

 Since $K$ is Polish, there is a continuous
surjection $p: \omega^{\omega}\rightarrow K$. Given a family
$\mathcal{F}=\{p^{-1}(B(Y_{\xi}, \beta_{\xi}): \xi<\theta \}$ of
$\theta$-many Borel sets ($\aleph_1<\theta<\mathfrak{c}$) whose
union is $\omega^{\omega}$.

 By property $(\star)$, there is a
subfamily $\mathcal{F}_0=\{F_{\alpha}:
F_{\alpha}=p^{-1}(B(Y_{\xi_{\alpha}}, \beta_{\xi_{\alpha}}):
\alpha<\omega_1\}$ of size $\omega_1$ whose union is
$\omega^{\omega}$. Then the family
$\{B(Y_{\xi_{\alpha}},\beta_{\xi_{\alpha}}) : \alpha<\omega_1\}$
of size $\omega_1$ whose union is $K$. It follows that $\theta\leq
\aleph_1$, by Claim 1, it is a contradiction.

\end{proof}

\begin{proposition} There is a forcing extension in which for any $\kappa\in (\aleph_1, \mathfrak{c})$ there exists a metric space $X$ of density
$\kappa$ such that $X$ condenses onto $[0,1]$ but any completion
$\widetilde{X}$ of $X$ is not condensed onto a compactum.
\end{proposition}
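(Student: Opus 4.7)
The plan is to work inside the same forcing extension as in Theorem \ref{th3} and, for a fixed $\kappa\in(\aleph_1,\mathfrak{c})$, to produce $X$ as a two-piece disjoint sum that condenses onto $[0,1]$ while forcing its completion to be an absolute $\kappa$-Borel set of density $\kappa$. Explicitly, I would fix a subset $E\subset[0,1]$ with $|E|=\kappa$ and empty interior, take an abstract discrete space $E'$ of cardinality $\kappa$ with a bijection $\psi\colon E'\to E$, and let
\[
X=\bigl([0,1]\setminus E\bigr)\sqcup E'
\]
carry the metric that is Euclidean on $[0,1]\setminus E$, is the $0/1$-metric on $E'$, and equals $1$ between any two points lying in different pieces. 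The candidate condensation $\phi\colon X\to[0,1]$ is then the identity on $[0,1]\setminus E$ and $\psi$ on $E'$.

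The first step is the routine verification that $\phi$ is a continuous bijection (for any open $U\subseteq[0,1]$ the preimage $\phi^{-1}(U)$ is an open subset of $[0,1]\setminus E$ together with an arbitrary subset of the discrete piece $E'$), that $X$ is metrizable as a disjoint union of two metric pieces, and that $d(X)=\kappa$: $E'$ is a discrete subspace of cardinality $\kappa$, which in any metric topology forces $w(X)\ge\kappa$, hence $d(X)\ge\kappa$, while a countable dense subset of $[0,1]\setminus E$ together with $E'$ is dense in $X$ and of cardinality $\kappa$.

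The second step is to show that every completion $\widetilde{X}$ of $X$ falls under the hypotheses of Theorem \ref{th3}. Since $X$ sits densely in $\widetilde{X}$, any witness for $d(X)$ is also dense in $\widetilde{X}$, so $d(\widetilde{X})\le d(X)=\kappa$; conversely $E'$ remains a discrete subspace of $\widetilde{X}$ of cardinality $\kappa$, and in a metric space discrete subspaces have cardinality at most the weight, which gives $w(\widetilde{X})\ge\kappa$ and hence $d(\widetilde{X})=\kappa$. As a complete metric space, $\widetilde{X}$ is, trivially, a $\kappa$-Borel (indeed clopen) subset of itself in the sense of the paper's definition, and is therefore an absolute $\kappa$-Borel set of density $\kappa$ with $\aleph_1<\kappa<\mathfrak{c}$. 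Theorem \ref{th3} then forbids any condensation of $\widetilde{X}$ onto a metrizable compactum.

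There is no real obstacle once Theorem \ref{th3} is in hand; the only delicate point is to make sure that the discrete set $E'$, whose size is what forces $d(\widetilde{X})\ge\kappa$, genuinely survives into every completion as a witness of density. The independent, uniformly separated treatment of $E'$ inside the disjoint union guarantees this regardless of which compatible metric is chosen, after which the argument reduces entirely to the main theorem.
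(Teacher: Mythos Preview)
Your argument is correct and follows exactly the paper's strategy: construct a metric space of density $\kappa$ with an obvious condensation onto $[0,1]$, then note that any completion is complete metrizable (hence trivially absolute $\kappa$-Borel) with the same density $\kappa$, so Theorem~\ref{th3} finishes. The only cosmetic difference is the example itself---the paper partitions $[0,1]$ into $\kappa$ dense pieces $A_\alpha$ and takes $X=\bigoplus_{\alpha<\kappa}A_\alpha$, while you split off a single $\kappa$-sized discrete piece---but both constructions work for identical reasons and the reduction to Theorem~\ref{th3} is the same.
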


\begin{proof}
Consider the forcing extension in Theorem \ref{th3}. Let
$[0,1]=\coprod \{A_{\alpha}: \alpha\in \kappa\}$ be a partition
such that $A_{\alpha}$ is dense in $[0,1]$ for each $\alpha\in
\kappa$. Let $X=\bigoplus \{A_{\alpha}: \alpha\in \kappa\}$. It is clear
that $X$ condenses onto $[0,1]$, but any completion
$\widetilde{X}$ of $X$ is an absolute Borel set of density
$\kappa$. Hence, by Theorem \ref{th3}, $\widetilde{X}$ is not
condensed onto a compactum.
\end{proof}

\section{Appendix}

Recently,  a good work has been done to partition Polish spaces into Borel sets [2-6].
In \cite{Br2}, W. Brian defined the {\it Borel partition spectrum}, denote $\mathfrak{sp}(Borel)$, as follows:

$\mathfrak{sp}(Borel)=\{|\mathcal{P}|: \mathcal{P}$ is a partition of $\mathbb{R}$ into uncountable many Borel sets $\}$.

Let us by reviewing what is known about  $\mathfrak{sp}(Borel)$.

\medskip

(1) $\aleph_1\in \mathfrak{sp}(Borel)$.

\medskip

(2) $\mathfrak{c}=\max(\mathfrak{sp}(Borel))$.

\medskip

(3)  $\mathfrak{sp}(Borel)$ is closed under singular limits.

\medskip

(4) It is consistent with arbitrary values of $\mathfrak{c}$ that  $\mathfrak{sp}(Borel)=\{\aleph_1,\mathfrak{c}\}$.

\medskip

(5) It is consistent with arbitrary values of $\mathfrak{c}$  that  $\mathfrak{sp}(Borel)=[\aleph_1,\mathfrak{c}]$.

\medskip

(6)  Suppose $C$ is a set of uncountable cardinals such that:

\medskip

 \, \, (a) $C$ is countable,

 \, \, (b) $\aleph_1\in C$,

 \, \, (c) $C$ has a maximum, and $\max(C)^{\aleph_0}=\max(C)$,

\, \, (d) $C$ is closed under singular limits, and

\, \, (e) if $\lambda\in C$ and $cf(\lambda)=\omega$, then $\lambda^+\in C$.

\medskip

Assuming $GCH$ holds up to $\max C$, there is a $ccc$ forcing extension
in which $C=\mathfrak{sp}(Borel)$.

\medskip

The following theorem unexpectedly points to the connection  between those $\kappa$ for which absolute $\kappa$-Borel set, containing a closed subspace of the Baire space
of weight $\kappa$, condenses onto a compactum, and those $\kappa$ for which there is a partition of the real line into $\kappa$ Borel sets, i.e. $\kappa\in \mathfrak{sp}(Borel)$.

Given a topological space X, define

$\mathfrak{par}(X)=\min\{|P|: P$ is a partition of $X$ into Polish spaces $\}$.

\begin{theorem}\label{th5} Let $\kappa$ be an uncountable cardinal with $\kappa=\mathfrak{par}(\kappa^{\omega})$. Then
the following are equivalent:

(1) $\kappa\in \mathfrak{sp}(Borel)$.

(2) An absolute $\kappa$-Borel set, containing a closed subspace $\kappa^{\omega}$, condenses onto a compactum.

\end{theorem}

\begin{proof}   Assume that $\kappa\in \mathfrak{sp}(Borel)$. Then, by the Banakh-Plichko Theorem \cite{BaPl}, any Banach space of density $\kappa$ condenses onto the Hilbert cube. By the Medvedev Theorem, any absolute $\kappa$-Borel set $X$ of density $\kappa$,
containing a closed subspace of the Baire space $\kappa^{\omega}$
of weight $\kappa$, condenses onto a Banach space of weight
$\kappa$ (Theorem 3 in \cite{Med}). It follows that $X$ condenses onto the Hilbert cube.

Assume that there is a condensation $g$ of an absolute $\kappa$-Borel set $X$ onto a compactum $K$. Since $X$ is an absolute $\kappa$-Borel set of
density $\kappa$ there is  a continuous bijection $f: A\rightarrow
X$ where $A$ is a closed subset of $\kappa^{\omega}$ (Theorem 5 in
\cite{Han}) and a continuous surjection $q:
\kappa^{\omega}\rightarrow A$ (Theorem 4 in \cite{st1}). Since $f$
is a continuous bijection  and $d(X)=\kappa$, $d(A)=\kappa$.  Then we have the continuous bijection $h=g\circ
f: A\rightarrow K$ from $A$ onto $K$.

Since $\kappa=\mathfrak{par}(\kappa^{\omega})$, the set $A$ can be partitioned into $\kappa$ Polish spaces. Let $A=\bigsqcup_{\alpha<\kappa} P_{\alpha}$ where $P_{\alpha}$ is Polish for every $\alpha\in \kappa$.
By a theorem of Lusin and Suslin, $h(P_{\alpha})$ is a Borel subset of $K$ for every $\alpha\in \kappa$. By Lemma 3.1 in \cite{Br5}, $\kappa\in \mathfrak{sp}(Borel)$.

\end{proof}

By Theorem \ref{th5} and  Theorem 3.2 in \cite{Br2},  we have

\begin{corollary} {\it Given any $A\subseteq \mathbb{N}$ with $1\in A$, there is a forcing extension in which every  absolute $\aleph_n$-Borel set, containing a closed subspace of the Baire space
of weight $\aleph_n$, condenses onto a compactum if, and only if, $n\in A$ }.
\end{corollary}

In particular, by Theorem \ref{th5} and Corollaries 3.3 and 3.4 in \cite{Br2}, we have the following results.

\begin{corollary}{\it Given any $A\subseteq \mathbb{N}$, $1\in A$, there is a forcing extension in which an absolute $\kappa$-Banach space $X$ of density $\kappa$ condenses onto the Hilbert cube if, and only if, $\kappa\in \{\aleph_n:n\in A\}\cup \{\aleph_{\omega}, \aleph_{\omega+1}= \mathfrak{c}\}$}.
\end{corollary}

\begin{corollary}{\it Given any finite $A\subseteq \mathbb{N}$, $1\in A$, there is a forcing extension in which an absolute $\kappa$-Banach space $X$ of density $\kappa$ condenses onto the Hilbert cube if, and only if, $\kappa\in \{\aleph_n:n\in A\}$}.
\end{corollary}

\bigskip

{\bf Acknowledgment.} I would like to thank William Brian and the
referee for careful reading and valuable comments.

\bibliographystyle{model1a-num-names}
\bibliography{<your-bib-database>}

\end{document}